%
%
%

\documentclass[12pt,a4paper,reqno]{amsart}
\usepackage[active]{srcltx}
\usepackage{enumerate}
\usepackage{amssymb}
\usepackage[usenames]{xcolor}
\usepackage{url}


\textwidth=6truein
\textheight=8.75truein
\addtolength{\headheight}{6pt}
\hoffset-12mm
\voffset-5mm

\usepackage{cases}

%
%
\newcommand{\Z}{{\mathbb Z}}

%
%
%
%

\newcommand{\Hom}{\operatorname{Hom}\nolimits}

\newcommand{\Aut}{\operatorname{Aut}\nolimits}

\newcommand{\aut}{\operatorname{aut}\nolimits}
\newcommand{\Out}{\operatorname{Out}\nolimits}

\newcommand{\Syl}{\operatorname{Syl}\nolimits}

%
%
%
%
\newcommand{\A}{\ifmmode{\mathcal{A}}\else${\mathcal{A}}$\fi}
\newcommand{\K}{\ifmmode{\mathcal{K}}\else${\mathcal{K}}$\fi}
\newcommand{\U}{\ifmmode{\mathcal{U}}\else${\mathcal{U}}$\fi}
\newcommand{\M}{\ifmmode{\mathcal{M}}\else${\mathcal{M}}$\fi}
\newcommand{\N}{\ifmmode{\mathcal{N}}\else${\mathcal{M}}$\fi}
\newcommand{\Ff}{\ifmmode{\mathcal{F}}\else${\mathcal{F}}$\fi}
\newcommand{\Ll}{\ifmmode{\mathcal{L}}\else${\mathcal{L}}$\fi}
\newcommand{\T}{\ifmmode{\mathcal{T}}\else${\mathcal{T}}$\fi}

%


\newtheorem{theorem}{Theorem}[section]
\newtheorem*{theorem*}{Theorem}
\newtheorem{proposition}[theorem]{Proposition}

\newtheorem{lemma}[theorem]{Lemma}

\theoremstyle{definition}
\newtheorem{definition}[theorem]{Definition}

\newtheorem{example}[theorem]{Example}

\theoremstyle{remark}


\theoremstyle{plain}


\title[Thompson's theorems for fusion systems]%
{On Thompson's $p$-complement theorems for saturated fusion systems}

\author{Jon Gonz\'alez-S\'anchez}
\address{Departamento de Matem\'aticas,%
Universidad de Pa{\'\i}s Vasco,
Apartado 644,
48080 Bilbao
Spain.}
\email{jon.gonzalez@ehu.es}

\author{Albert Ruiz}
\address{Departament de Matem{\`a}tiques,
Universitat Aut{\`o}noma de Barcelona, 08193 Cerdanyola del
Vall{\`e}s, Spain.}
\email{Albert.Ruiz@uab.cat}

\author{Antonio Viruel}
\address{Departamento de {\'A}lgebra, Geometr{\'\i}a y Topolog{\'\i}a,
Universidad de M{\'a}\-la\-ga, Campus de Teatinos, s/n, 29071-M{\'a}laga,
Spain.}
\email{viruel@agt.cie.uma.es}

\subjclass[2010]{20D20, 55R35}
\thanks{
First author is partially supported by MINECO MTM2011-28229-C02-01. Second author
is partially supported by FEDER-MICINN grant MTM2010-20692. Third author is partially
supported by FEDER-MICINN grant MTM2010-18089, and
Junta de Andaluc{\'\i}a grants FQM-213 and P07-FQM-2863. Second and third authors
are partially supported by Generalitat de Catalunya grant
2009SGR-1092.
}

\begin{document}

\begin{abstract}
In this short note we prove that a saturated fusion system admitting some special type of automorphism is nilpotent. This generalizes classical results by J.G.\ Thompson.
\end{abstract}

\maketitle


\section{Introduction}

In his PhD\ Thesis, John\ G.\ Thompson proved the long standing conjecture that the Frobenius kernel of a
Frobenius group is nilpotent \cite{Thompson1}. The Frobenius kernel always admits a fixed-point-free
automorphism of prime order which turned out to be a sufficient condition for the nilpotency.
In fact both results, the nilpotency of the Frobenius kernel and the nilpotency of a finite
group admitting an fixed-point-free automorphism of prime order are equivalent.
In order to prove this result, Thompson introduced his famous $p$-nilpotency criterion for
an odd prime $p$, a group $G$ is $p$-nilpotent if and only if the normalizer of the $J$ subgroup
of a Sylow $p$-subgroup and the centralizer of the center of a Sylow $p$-subgroup are $p$-nilpotent
(see \cite{Thompson3}). In fact, Thompson proved that a group $G$ admitting automorphisms that
leave invariant some special subgroups is $p$-nilpotent. The following translates \cite[Theorem A]{Thompson2} for saturated fusion systems, and includes an extra hypothesis to cover the $p=2$ case (see Definition \ref{def:h-free}):

\begin{theorem}\label{thm:A}
Let $\Ff$ be a saturated fusion system over a $p$-group $S$ such that either $p$ is odd, or $p=2$ and $\Ff$ is $\Sigma_4$-free.
Let $\U$ be a group of automorphisms of $(S,\Ff)$. Suppose for every $\U$-invariant normal subgroup $Q\unlhd S$, $\Aut_\Ff(Q)$ is a $p$-group. Then $\Ff=\Ff_S(S)$.
\end{theorem}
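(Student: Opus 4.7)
\emph{Overall plan.} I would proceed by induction on $|S|$ with the fusion-system version of the Glauberman--Thompson $p$-nilpotency theorem as the main lever: under the hypothesis on $p$ and $\Ff$, $\Ff$ is nilpotent if and only if $N_\Ff(Z(J(S)))$ is nilpotent. The base case $|S|=1$ is immediate.

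\emph{Inductive step.} Set $Q:=Z(J(S))$. Being characteristic in $S$, $Q$ is $\U$-invariant and normal in $S$, so by hypothesis $\Aut_\Ff(Q)$ is a $p$-group; since $Q\unlhd S$ makes $Q$ fully $\Ff$-normalised, the saturation axioms force $\Aut_\Ff(Q)=\Aut_S(Q)$. By Glauberman--Thompson it suffices to prove $\Ff':=N_\Ff(Q)$ is nilpotent. I would then pass to the quotient $\Ff'/Q$ on the strictly smaller $p$-group $\overline S:=S/Q$. The $\U$-action descends to $\overline S$; any $\U$-invariant normal subgroup of $\overline S$ is the image of a $\U$-invariant normal $R\supseteq Q$ of $S$, and $\Aut_{\Ff'/Q}(R/Q)$ is a subquotient of the $p$-group $\Aut_\Ff(R)$. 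After verifying that the $\Sigma_4$-free hypothesis also passes to quotients by normal subgroups, the induction hypothesis yields $\Ff'/Q=\Ff_{\overline S}(\overline S)$.

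\emph{Lifting and main obstacle.} The remaining, and I expect most delicate, step is to deduce $\Ff'=\Ff_S(S)$ from $Q\unlhd \Ff'$, $\Aut_{\Ff'}(Q)=\Aut_S(Q)$ and $\Ff'/Q=\Ff_{\overline S}(\overline S)$. For $P\le S$ with $Q\le P$, any $\varphi\in\Aut_{\Ff'}(P)$ restricts to an element of the $p$-group $\Aut_S(Q)$ and induces an element of the $p$-group $\Aut_{\overline S}(P/Q)$; since the kernel of $\Aut(P)\to\Aut(Q)\times\Aut(P/Q)$ is itself a $p$-group (derivations of $P/Q$ into $Q$), $\Aut_{\Ff'}(P)$ is a $p$-group and therefore coincides with its Sylow $p$-subgroup $\Aut_S(P)$. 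For arbitrary $P\le S$, one extends $\varphi\in\Aut_{\Ff'}(P)$ to $PQ$ using $Q\unlhd \Ff'$, applies the previous case to identify the extension with an inner automorphism of $PQ$, and restricts back to recover $\varphi\in\Aut_S(P)$. The principal obstacle is precisely this lifting: one must check that the kernel/saturation interplay really does force $\Aut_{\Ff'}(P)$ to be a $p$-group for every $P$, and that the passage to the quotient genuinely preserves both saturation and the $\U$-equivariant hypothesis. Once those points are in hand, Glauberman--Thompson closes the induction.
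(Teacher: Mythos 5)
Your proposal is correct in outline and rests on the same two pillars as the paper's argument --- the fusion-system Glauberman--Thompson theorem and the fact that the stability group of the chain $1\le Q\le P$ in a $p$-group is a $p$-group \cite[Theorem 5.3.2]{gorenstein-68} --- but it is organised differently. The paper runs a minimal counterexample argument, minimal both in $|S|$ and in the number of morphisms: minimality of the morphism set shows directly that $\Ff=N_\Ff(W(S))$ (no separate reduction to $N_\Ff(Q)$ is needed), the quotient $\Ff/W(S)$ is nilpotent by minimality of $|S|$, and Alperin's theorem then produces an element of order $q\ne p$ in $\Aut_\Ff(W(S))$, which is fed into a semidirect-product lemma so that the same proof also covers Theorem \ref{thm:B}. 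You induct on $|S|$ alone and replace the ``find a $q$-element'' step by its contrapositive, an explicit lifting argument showing every $\Aut_{N_\Ff(Q)}(P)$ is a $p$-group; for Theorem \ref{thm:A} in isolation this is somewhat more direct, since you never need $W(S)$ to be elementary abelian nor the coprime-action Lemma \ref{lemma:semidirect}, but it does not extend as written to the fixed-point-free case. Two points deserve care. First, at $p=2$ the cited analogue of Glauberman--Thompson (\cite[Theorem 1.1]{Onofrei-Stancu}) is stated for the Onofrei--Stancu characteristic subgroup $W(S)$, not for $Z(J(S))$; you should either work with $W(S)$ throughout or verify that the $Z(J(S))$ version holds under $\Sigma_4$-freeness. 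Second, your treatment of subgroups not containing $Q$ is the weakest part: the clean way to close the lifting is Alperin's fusion theorem (Theorem \ref{teo:alperin}) together with the fact that every $\Ff$-radical subgroup of $N_\Ff(Q)$ contains the normal subgroup $Q$ (\cite[Proposition 1.6]{BCGLO}), so that only subgroups $P\ge Q$ ever need to be considered; this is exactly the device the paper uses in its Step 3.
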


As a consequence of \cite[Theorem A]{Thompson2} Thompson proved that a group $G$ admitting a fixed-point-free
automorphism of prime order is $p$-nilpotent for all primes, and therefore the group is nilpotent \cite[Theorem 1]{Thompson1}.

Given a finite group $G$ and $\phi$ a prime order automorphism without fix points, then $\phi$ fixes
a  Sylow $p$-subgroup $S$ of $G$. For a fixed subgroup $H$ one has that $N_G(H)$ and $C_G(H)$ are also fixed.
Therefore, $\phi$ acts on the quotient group $N_G(H)/C_G(H)$ and the action on this quotient is without
fix points (see Lemma  \ref{thm:gor_10.1.2_3}(b)). So one can translate the concept of a
fixed-point-free automorphism of prime order from the category of finite groups to the category
of saturated fusion systems (see Definition \ref{def:fixpointfreefusion} and Proposition \ref{prop:fixedpointfree}).
It turned out that the concept of fixed-point-free automorphism in the category of saturated fusion systems
is more general since there exist finite groups admitting automorphism of prime order
with fix points such that the induced automorphism on the fusion category is fixed-point-free (see Example \ref{Ex:gp-vs-fusion}). Nevertheless, $p$-nilpotency holds under this mild assumption, as we prove this generalization of \cite[Theorem 1]{Thompson1}:

\begin{theorem}\label{thm:B}
Let $\Ff$ be a saturated fusion system over a $p$ group $S$ such that either $p$ is odd, or $p=2$ and $\Ff$ is $\Sigma_4$-free.
Then, if $(S,\Ff)$ admits a prime order fixed-point-free automorphism, $\Ff=\Ff_S(S)$.
\end{theorem}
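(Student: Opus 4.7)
The plan is to invoke Theorem~\ref{thm:A} with $\U := \langle\phi\rangle$, which reduces the problem to showing that, for every $\U$-invariant normal subgroup $Q \unlhd S$, the finite group $A := \Aut_\Ff(Q)$ is a $p$-group. Since $Q$ is $\phi$-invariant, $\phi$ induces by conjugation an automorphism of $A$, and by Proposition~\ref{prop:fixedpointfree} this induced automorphism is itself fixed-point-free of prime order $q$. The classical Thompson theorem \cite[Theorem~1]{Thompson1} applied to $A$ then gives that $A$ is nilpotent, so $A = P \times K$ where $P = \Aut_S(Q)$ is the Sylow $p$-subgroup and $K$ is the Hall $p'$-subgroup; both factors are characteristic, hence $\phi$-invariant, and $\phi$ acts fixed-point-freely on each.

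The crux is to exclude $K \neq 1$. I would form the semidirect product $G := Q \rtimes K$, which is well defined because $K \leq \Aut_\Ff(Q) \hookrightarrow \Aut(Q)$ acts faithfully on $Q$. Since $\phi$ acts on $K$ by conjugation inside $\Aut_\Ff(Q)$, the identity $(\phi(k))(\phi(x)) = \phi(k(x))$ for $k \in K$ and $x \in Q$ is immediate, so the diagonal map $(q,k) \mapsto (\phi(q),\phi(k))$ is a well-defined automorphism of $G$. This automorphism is fixed-point-free: at a fixed point $(q,k)$, $\phi(k) = k$ forces $k = 1$ by the previous paragraph, and $\phi(q) = q$ forces $q = 1$ because $\phi$ has no nontrivial fixed points on $S$ (and hence none on $Q \leq S$). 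A second application of \cite[Theorem~1]{Thompson1} now yields that $G$ is nilpotent; since $|Q|$ and $|K|$ are coprime this forces $G = Q \times K$, so $K$ centralizes $Q$, and by faithfulness of the $K$-action $K = 1$. With $A = P$ established, Theorem~\ref{thm:A} delivers $\Ff = \Ff_S(S)$.

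The main obstacle is the very last step above: the hypothesis that $\phi$ acts fixed-point-freely on $A$ is not by itself sufficient to force $A$ to be a $p$-group (for instance $C_7 \times C_{13}$ admits a fixed-point-free automorphism of order $3$). The semidirect-product construction bridges this gap by combining the fixed-point-freeness on $A$ with the faithful action of $K$ on the $p$-group $Q$, and it relies crucially on $\phi$ acting without nontrivial fixed points on $S$ itself---a property that should be read off from Definition~\ref{def:fixpointfreefusion} and Proposition~\ref{prop:fixedpointfree}.
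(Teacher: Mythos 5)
Your proof is correct, but it takes a genuinely different route from the paper. The paper proves Theorems \ref{thm:A} and \ref{thm:B} simultaneously by taking a minimal counterexample $(S,\Ff)$ admitting ``$\T$-automorphisms'', locating the Onofrei--Stancu subgroup $W(S)$, showing $\Ff=N_\Ff(W(S))$ via the Glauberman--Thompson-type results (which is where the $p$ odd / $\Sigma_4$-free hypothesis enters), passing to the quotient $\Ff/W(S)$, and finally contradicting Lemma \ref{lemma:semidirect} --- whose proof in the fixed-point-free case rests on the module-theoretic result \cite[Theorem 3.4.4]{gorenstein-68}. You instead reduce Theorem \ref{thm:B} to Theorem \ref{thm:A} by verifying its hypothesis for $\U=\langle\phi\rangle$, using the classical group-theoretic Thompson theorem twice: once to make $A=\Aut_\Ff(Q)$ nilpotent, and once, via the semidirect product $Q\rtimes K$ with its diagonal fixed-point-free automorphism, to kill the Hall $p'$-part $K$. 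This is a clean and valid reduction (your $Q\rtimes K$ step plays exactly the role of the paper's Lemma \ref{lemma:semidirect}, and Theorem \ref{thm:A} is established independently of the fixed-point-free case, so there is no circularity); what it buys is brevity and modularity, at the cost of importing the full strength of \cite[Theorem 1]{Thompson1} as a black box, whereas the paper deliberately keeps the argument fusion-theoretic and only needs the more elementary representation-theoretic lemma. Three cosmetic points: the fact that $\phi_\sharp$ is fixed-point-free on $\Aut_\Ff(Q)$ is part of Definition \ref{def:fixpointfreefusion} (Proposition \ref{prop:fixedpointfree} concerns fusion systems of groups, so it is the wrong citation there); the order of the induced automorphism is the same prime $r=|\phi|$, not a new prime $q$, and you should note that it is nontrivial (hence genuinely of order $r$) whenever $A\neq 1$, which follows from fixed-point-freeness; and the identification $P=\Aut_S(Q)$ uses that $Q\unlhd S$ is fully normalized, though it is not actually needed for the argument.
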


These results contribute to the list of nilpotency criteria for saturated fusion systems that generalize
classical criteria for finite groups, and fits within the framework of previous work by
Kessar-Linckelmann \cite{KL}, D{\'\i}az-Glesser-Mazza-Park \cite{DGMP},
D{\'\i}az-Glesser-Park-Stancu \cite{DGPS}, Cantarero-Scherer-Viruel \cite{CSV} and Craven \cite{Craven}.
Indeed, Theorem \ref{thm:A} can also be deduced from \cite[Corollary 4.6]{DGMP}, although the proof of Thompson's $p$-nilpotence criterion in \cite{DGMP} resorts to the group case, while the proof presented here is purely fusion theoretical. Another independent fusion theoretical proof of the odd prime case in Theorem \ref{thm:B} can be found in \cite{Craven}

This note is organized as follows. In Section \ref{Definitions} we recall the main properties of saturated fusion system
and we introduce the concept of a fixed-point-free automorphism of a saturated fusion system.
In Section \ref{Thompson} we provide a unified proof of Theorems \ref{thm:A} and \ref{thm:B}.

\section{Background on saturated fusion systems and finite groups} \label{Definitions}

In this section we review the concept of a
saturated fusion system over a $p$-group $S$ as defined in \cite{BLO2}, and define fixed-point-free automorphism of a saturated fusion system.

\begin{definition}\label{def:fusion_system}
A fusion system $\Ff$ over a finite $p$-group $S$ is a category
whose objects are the subgroups of $S$, and whose morphisms sets
$\Hom_\Ff(P,Q)$ satisfy the following two conditions:
\begin{enumerate}[(a)]
\item $\Hom_S(P,Q) \subseteq \Hom_\Ff(P,Q) \subseteq
\operatorname{Inj}(P,Q)$ for all $P$ and $Q$ subgroups of $S$.

\item Every morphism in $\Ff$ factors as an isomorphism in $\Ff$
followed by an inclusion.
\end{enumerate}
\end{definition}

We say that two subgroups $P$,$Q \leq S$ are
\textit{$\Ff$-conjugate} if there is an isomorphism between them
in $\Ff$. As all the morphisms are injective by condition (b), we
denote by $\Aut_\Ff(P)$ the group $\Hom_\Ff(P,P)$. We denote by
$\Out_\Ff(P)$ the quotient group $\Aut_\Ff(P)/\Aut_P(P)$.

The fusion systems that we consider are saturated, so we need the following
definitions:

\begin{definition}\label{def:N}
Let $\Ff$ be a fusion system over a $p$-group $S$.
\begin{itemize}
\item A subgroup $P \leq S$ is \emph{fully centralized in $\Ff$}
if $|C_S(P)|\geq|C_S(P')|$ for all $P'$ which is $\Ff$-conjugate
to $P$.
\item A subgroup $P \leq S$ is \emph{fully normalized in
$\Ff$} if $|N_S(P)|\geq|N_S(P')|$ for all $P'$ which is
$\Ff$-conjugate to $P$.
\item $\Ff$ is a \emph{saturated fusion
system} if the following two conditions hold:
\begin{enumerate}[(I)]
\item Every fully normalized in $\Ff$ subgroup $P \leq S$ is fully
centralized in $\Ff$ and
$\Aut_S(P) \in \Syl_p(\Aut_\Ff(P))$. \item If
$P\leq S$ and $\varphi \in \Hom_\Ff(P,S)$ are such that $\varphi
P$ is fully centralized, and if we set
$$ N_\varphi=\{ g \in N_S(P) \mid \varphi c_g \varphi^{-1} \in
\Aut_S(\varphi P)\} ,
$$ then there is $\overline\varphi \in \Hom_\Ff(N_\varphi,S)$ such
that $\overline\varphi|_P=\varphi$.
\end{enumerate}
\end{itemize}
\end{definition}

As expected, every finite group $G$ gives rise to a saturated
fusion system over $S$, a Sylow $p$-subgroup of $G$, denoted by $(S,\Ff_G(S))$ \cite[Proposition 1.3]{BLO2}.
But there exist saturated fusion systems that are not the fusion system of any finite group, e.g.\ \cite[Section 9]{BLO2} or \cite{RV}.

Let $\Ff$ be a fusion system over a $p$-group $S$, and $Q$ a subgroup of $S$.
We can take the \emph{normalizer of $Q$ in $\Ff$} as the fusion system
over the normalizer of $Q$ in $S$, $N_S(Q)$, with morphisms:
$$
\Hom_{N_\Ff(Q)}(P,P')=\{\varphi\in\Hom_\Ff(P,P') \mid
 \exists \psi\in\Hom_\Ff(PQ,P'Q), \psi|_P=\varphi\}.
$$
Although, $(N_S(Q),N_\Ff(Q))$ is not always a saturated fusion system, it is so when $Q$ is fully normalized \cite[Proposition A.6]{BLO2}:
\begin{proposition}\label{prop:fullynormalized}
Let $\Ff$ be a saturated fusion system over a $p$-group $S$. If $Q\leq S$ is fully normalized in $\Ff$, then $(N_S(Q),N_\Ff(Q))$ is a saturated fusion system.
\end{proposition}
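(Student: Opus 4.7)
The plan is to verify the two saturation axioms of Definition \ref{def:N} for $(N_S(Q), N_\Ff(Q))$, bootstrapping from the saturation of $\Ff$ and the hypothesis that $Q$ is fully normalized in $\Ff$. The key technical input is the \emph{receptivity} of $Q$ in $\Ff$: since $Q$ is fully normalized, axiom (II) applied to inverse isomorphisms into $Q$, combined with the Sylow condition $\Aut_S(Q)\in\Syl_p(\Aut_\Ff(Q))$, yields that every $\Ff$-conjugate $R$ of $Q$ admits an $\Ff$-isomorphism $\alpha\colon R\to Q$ extending to a morphism $\bar\alpha\in\Hom_\Ff(N_S(R), N_S(Q))$.

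From this receptivity I derive a reduction lemma: for every $P\leq N_S(Q)$, there is an $N_\Ff(Q)$-conjugate $P^*$ of $P$ such that $P^*Q$ is fully normalized in $\Ff$. I pick $\psi\in\Hom_\Ff(PQ,S)$ with $\psi(PQ)$ fully $\Ff$-normalized; since $\psi(PQ)$ normalizes $\psi(Q)$, receptivity applied to $\psi(Q)$ produces $\bar\sigma\in\Hom_\Ff(N_S(\psi(Q)), N_S(Q))$ with $\bar\sigma(\psi(Q))=Q$, and the composite $\bar\sigma\psi\colon PQ\to N_S(Q)$ fixes $Q$ setwise. By definition it therefore restricts to an $N_\Ff(Q)$-isomorphism $P\to P^*:=\bar\sigma\psi(P)$, and $P^*Q=\bar\sigma\psi(PQ)$ remains fully $\Ff$-normalized as an $\Ff$-conjugate of $\psi(PQ)$.

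With the reduction lemma in hand, axiom (I) is obtained as follows. Let $P\leq N_S(Q)$ be fully normalized in $N_\Ff(Q)$; the identity $N_{N_S(Q)}(P)=N_S(PQ)\cap N_S(Q)$ together with the reduction lemma allow us to assume that $PQ$ itself is fully $\Ff$-normalized. Axiom (I) for $\Ff$ applied to $PQ$ then supplies $\Aut_S(PQ)\in\Syl_p(\Aut_\Ff(PQ))$ and maximality of $|C_S(PQ)|$. Translating via the natural surjection $\Aut_\Ff(PQ)\twoheadrightarrow\Aut_{N_\Ff(Q)}(P)$ obtained by restriction (which sends $\Aut_S(PQ)$ onto $\Aut_{N_S(Q)}(P)$) and the identification $C_{N_S(Q)}(P)=C_S(PQ)$ yields both conclusions of axiom (I) for $P$ in $N_\Ff(Q)$.

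For axiom (II), let $\varphi\in\Hom_{N_\Ff(Q)}(P,N_S(Q))$ with $\varphi(P)$ fully centralized in $N_\Ff(Q)$. Lift $\varphi$ to $\psi\in\Hom_\Ff(PQ,N_S(Q))$ with $\psi(Q)=Q$, and, by the reduction lemma applied to $\varphi(P)$ together with a receptive adjustment, arrange that $\psi(PQ)$ is fully centralized in $\Ff$. Axiom (II) for $\Ff$ then extends $\psi$ to $\bar\psi\colon N_\psi\to S$, and the inclusion $N_\varphi\cap N_S(Q)\subseteq N_\psi$ shows that $\bar\psi$ restricts to an extension of $\varphi$ lying in $\Hom_{N_\Ff(Q)}(N_\varphi, N_S(Q))$. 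The main obstacle is the bookkeeping required throughout: ensuring that every invoked $\Ff$-morphism preserves $Q$ setwise so that it genuinely defines an $N_\Ff(Q)$-morphism, and that at each step the full-normalization or full-centralization hypotheses needed to apply $\Ff$-saturation can be secured by a suitable receptive adjustment.
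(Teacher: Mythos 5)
The paper does not prove this proposition itself; it quotes it from \cite[Proposition A.6]{BLO2}, and your overall strategy --- receptivity of the fully normalized subgroup $Q$, reduction to a well-placed $N_\Ff(Q)$-conjugate, then transfer of the two saturation axioms from $\Ff$ to $N_\Ff(Q)$ --- is the standard one used there. The receptivity statement in your first paragraph is correct and is the right starting point. However, the steps you dismiss as ``bookkeeping'' are exactly where the proof lives, and several of the identities you rely on are false.

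First, being fully normalized is \emph{not} preserved under $\Ff$-conjugacy, so the last claim of your reduction lemma does not follow: $P^*Q=\bar\sigma\psi(PQ)$ is an $\Ff$-conjugate of the fully normalized subgroup $\psi(PQ)$, but $\bar\sigma$ is only defined on $N_S(\psi(Q))$, which need not contain $N_S(\psi(PQ))$, so you cannot conclude $|N_S(P^*Q)|\geq|N_S(\psi(PQ))|$. Second, the identities $N_{N_S(Q)}(P)=N_S(PQ)\cap N_S(Q)$ and $C_{N_S(Q)}(P)=C_S(PQ)$ are wrong: the correct ones are $N_{N_S(Q)}(P)=N_S(P)\cap N_S(Q)$ and $C_{N_S(Q)}(P)=C_S(P)\cap N_S(Q)$ (take $P=1$ and $Q$ with $C_S(Q)\lneq N_S(Q)$ to see the second fail), and only one inclusion relates these to $N_S(PQ)$ and $C_S(PQ)$. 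Since those equalities are what you use to translate ``fully normalized/centralized in $N_\Ff(Q)$'' into statements about $PQ$ in $\Ff$, both halves of axiom (I) and the set-up for axiom (II) are left unproved. Third, restriction does not define a homomorphism on all of $\Aut_\Ff(PQ)$: only the subgroup of automorphisms stabilizing both $P$ and $Q$ restricts to $\Aut_{N_\Ff(Q)}(P)$, and showing that $\Aut_{N_S(Q)}(P)$ is a Sylow $p$-subgroup of $\Aut_{N_\Ff(Q)}(P)$ requires a Frattini-type argument inside $\Aut_\Ff(PQ)$; this is the heart of the proof in \cite{BLO2} and is missing here. As written, the argument does not close.
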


We also need results concerning the quotients of saturated fusion systems. Recall
that if $(S,\Ff)$ is a saturated fusion system, we say that $Q\leq S$ is a
\textit{weakly $\Ff$-closed} subgroup if $Q$ is not $\Ff$-conjugate to any other
subgroup of $S$.

In \cite[Lemma 2.6]{Oliver-p-odd} we can find the following result:

\begin{lemma}\label{lemma:oliver-quotients}
Let $(S,\Ff)$ be a saturated fusion system, and let $Q\vartriangleleft S$ be a
weakly $\Ff$-closed subgroup. Let $\Ff/Q$ be the fusion system over $S/Q$ defined by
setting:
$$
\Hom_{\Ff/Q}(P/Q,P'/Q)=\{\varphi/Q \mid \varphi\in\Hom_\Ff(P,P')\}
$$
for all $P$, $P'\leq S$ which contains $Q$. Then $\Ff/Q$ is saturated.
\end{lemma}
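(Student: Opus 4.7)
The plan is to check, in order, that $\Ff/Q$ is a well-defined fusion system and then that it satisfies both saturation axioms (I) and (II), in each case by lifting to $\Ff$, applying its saturation, and descending.

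\emph{Well-definedness.} Since $Q$ is weakly $\Ff$-closed, any morphism $\varphi \in \hom_\Ff(P,P')$ with $Q\leq P$ carries $Q$ to an $\Ff$-conjugate of $Q$ inside $S$, hence $\varphi(Q)=Q$. Thus $\varphi$ descends to $\bar\varphi : P/Q \to P'/Q$, composition is preserved, and the axioms of Definition \ref{def:fusion_system} for $\Ff/Q$ follow immediately from those for $\Ff$ (inner $S/Q$-conjugations lift to inner $S$-conjugations, and factorization as isomorphism followed by inclusion is inherited).

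\emph{The key dictionary.} For $Q\unlhd S$ and $Q\leq P\leq S$ one has $N_{S/Q}(P/Q)=N_S(P)/Q$, and the assignment $P^*\mapsto P^*/Q$ gives a bijection between $\Ff$-conjugates of $P$ (all of which contain $Q$, by weak closure) and $\Ff/Q$-conjugates of $P/Q$. Consequently, $P/Q$ is fully normalized in $\Ff/Q$ iff $P$ is fully normalized in $\Ff$.

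\emph{Axiom (I).} If $\bar P=P/Q$ is fully normalized in $\Ff/Q$, the dictionary makes $P$ fully normalized in $\Ff$, so by saturation of $\Ff$, $\Aut_S(P)\in\Syl_p(\Aut_\Ff(P))$ and $P$ is fully centralized in $\Ff$. The surjection $\Aut_\Ff(P)\twoheadrightarrow\Aut_{\Ff/Q}(\bar P)$ sends $\Aut_S(P)$ onto $\Aut_{\bar S}(\bar P)$, so the latter is Sylow in $\Aut_{\Ff/Q}(\bar P)$. To see $\bar P$ is fully centralized, given $\bar P'$ conjugate to $\bar P$, lift an $\Ff/Q$-isomorphism $\bar P'\to\bar P$ to $\varphi:P'\to P$ in $\Ff$ and apply axiom (II) of $\Ff$: because $P$ is fully centralized in $\Ff$, $\varphi$ extends over $N_\varphi$ and one checks that $N_\varphi\cdot Q$ covers the preimage $\{g\in S\mid[g,P']\leq Q\}$ of $C_{\bar S}(\bar P')$, producing an injection $C_{\bar S}(\bar P')\hookrightarrow C_{\bar S}(\bar P)$.

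\emph{Axiom (II).} Given $\bar\varphi:\bar P\to\bar S$ with $\bar\varphi(\bar P)$ fully centralized in $\Ff/Q$, lift to $\varphi\in\hom_\Ff(P,S)$; after replacing $\varphi(P)$ by a fully normalized $\Ff$-representative of its class (which lies over the same $\Ff/Q$-class), one applies axiom (II) of $\Ff$ to extend $\varphi$ over $N_\varphi$, and the extension descends. One then checks that the downstairs version $\overline{N_\varphi}$ contains the intended domain $\bar N_{\bar\varphi}$; this requires verifying that any $\bar g\in\bar S$ with $\bar\varphi\,\bar c_{\bar g}\,\bar\varphi^{-1}\in\Aut_{\bar S}(\bar\varphi\bar P)$ admits a lift $g$ with $\varphi c_g\varphi^{-1}\in\Aut_S(\varphi P)$, which reduces to a Sylow-replacement argument inside $\Aut_{\Ff}(\varphi P)$, using again that its Sylow subgroup $\Aut_S(\varphi P)$ surjects onto $\Aut_{\bar S}(\bar\varphi\bar P)$.

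The main obstacle throughout is that $C_{S/Q}(P/Q)$ equals the quotient of $\{g\in S : [g,P]\leq Q\}$ rather than $C_S(P)Q/Q$, so the translation of ``fully centralized'' between $\Ff$ and $\Ff/Q$ is not formal; it has to be extracted via the extension axiom combined with the Sylow property of $\Aut_S(P)$ in $\Aut_\Ff(P)$.
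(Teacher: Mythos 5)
The paper itself offers no proof of this lemma: it is quoted verbatim from \cite[Lemma 2.6]{Oliver-p-odd}, so there is no internal argument to compare yours against. Judged on its own terms, your strategy (lift to $\Ff$, use saturation there, descend, with the surjection $\Aut_\Ff(P)\twoheadrightarrow\Aut_{\Ff/Q}(P/Q)$ carrying $\Aut_S(P)$ onto $\Aut_{S/Q}(P/Q)$) is the standard and correct one, and you rightly identify the crux: $C_{S/Q}(P/Q)$ is the image of $D'=\{g\in S:[g,P]\leq Q\}$, not of $C_S(P)Q$. The well-definedness, the dictionary $N_{S/Q}(P/Q)=N_S(P)/Q$, and the Sylow half of axiom (I) are all fine.

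There is, however, a genuine gap in your fully-centralized step as written. For an arbitrary lift $\varphi\colon P'\to P$ of $\bar\varphi$, the claimed containment of $D'=\{g\in S:[g,P']\leq Q\}$ in $N_\varphi\cdot Q$ (which here equals $N_\varphi$, since $Q\leq P'\leq N_\varphi$) is false in general: for $g\in D'$ the automorphism $\varphi c_g\varphi^{-1}$ is a $p$-element of $\Aut_\Ff(P)$ inducing the identity on $P/Q$, but nothing forces it to lie in the particular Sylow subgroup $\Aut_S(P)$, so $g$ need not belong to $N_\varphi$. The repair is exactly the Sylow-replacement you invoke only later, in the axiom (II) discussion: $\varphi\Aut_{D'}(P')\varphi^{-1}$ is a $p$-subgroup of $\Aut_\Ff(P)$, and since $P$ is fully normalized in $\Ff$ one may choose $\chi\in\Aut_\Ff(P)$ with $\chi\varphi\Aut_{D'}(P')\varphi^{-1}\chi^{-1}\leq\Aut_S(P)$ and replace $\varphi$ by $\chi\varphi$ before applying axiom (II) of $\Ff$; only then does the extension carry $D'$ into $D=\{h:[h,P]\leq Q\}$ and yield $|C_{S/Q}(P'/Q)|\leq|C_{S/Q}(P/Q)|$. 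A similar looseness affects your axiom (II): the hypothesis downstairs is only that $\bar\varphi(\bar P)$ is fully \emph{centralized} in $\Ff/Q$, and after rerouting through a fully normalized $\Ff$-representative you must still transport the extended morphism back to the original target; since saturation of $\Ff/Q$ is not yet available, this return trip needs an explicit argument (or one should verify the equivalent ``fully automized and receptive'' form of saturation instead of axioms (I)--(II) literally). The needed ideas are all present in your note, but the steps where they must be deployed are asserted rather than carried out, and one of them fails as literally stated.
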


Some classical
results for finite groups can be generalized to saturated fusion
systems, as for example, Alperin's Fusion Theorem for saturated fusion systems \cite[Theorem
A.10]{BLO2}:

\begin{definition}\label{def:centric-radical}
Let $\Ff$ be any fusion system over a $p$-group $S$. A subgroup
$P\leq S$ is:
\begin{itemize}
\item  \emph{$\Ff$-centric} if $P$ and all its $\Ff$-conjugates
contain their $S$-centralizers.
\item \emph{$\Ff$-radical} if $\Out_\Ff(P)$ is
$p$-reduced, that is, if $\Out_\Ff(P)$ has no nontrivial normal $p$-subgroups.
\end{itemize}
\end{definition}

\begin{theorem}\label{teo:alperin}
Let $\Ff$ be a saturated fusion system over $S$. Then for each
morphism $\psi\in\Aut_\Ff(P,P')$, there exists a sequence of
subgroups of $S$
$$P=P_0, P_1,\ldots,P_k=P'\quad\text{and}\quad Q_1,Q_2,\ldots,Q_k,$$
and morphisms $\psi_i\in\Aut_\Ff(Q_i)$, such that
\begin{itemize}
\item $Q_i$ is fully normalized in $\Ff$, $\Ff$-radical and
  $\Ff$-centric for each $i$;
\item $P_{i-1},P_i\leq Q_i$ and $\psi_i(P_{i-1})=P_i$ for each
$i$;
  and
\item $\psi=\psi_k\circ\psi_{k-1}\circ\cdots\circ\psi_1$.
\end{itemize}
\end{theorem}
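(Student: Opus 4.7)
Plan. The approach is the classical Alperin argument adapted to the fusion-system language. By Definition~\ref{def:fusion_system}(b) every morphism factors as an isomorphism followed by an inclusion, so it suffices to decompose $\Ff$-isomorphisms $\psi\colon P\to P'$. Choosing a fully normalized $\Ff$-conjugate $P^{*}$ of $P$ (hence of $P'$) and using axiom~(II) of Definition~\ref{def:N} to extend isomorphisms $P\to P^{*}$ and $P'\to P^{*}$ to their respective subgroups $N_\varphi$, the problem reduces to decomposing a single $\psi\in\Aut_\Ff(P)$ with $P$ fully normalized.

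I would then proceed by downward induction on $|P|$. The base case $P=S$ is immediate: $S$ is trivially fully normalized, is $\Ff$-centric since $C_S(S)=Z(S)\le S$, and is $\Ff$-radical because axiom~(I) gives $\Aut_S(S)\in\Syl_p(\Aut_\Ff(S))$, which forces $\Out_\Ff(S)$ to be a $p'$-group and in particular $O_p(\Out_\Ff(S))=1$; take $k=1$ and $Q_1=S$. For the inductive step, if $P$ is already both $\Ff$-centric and $\Ff$-radical then again $k=1$, $Q_1=P$, $\psi_1=\psi$ work; otherwise the strategy is to produce an extension of $\psi$ to a strictly larger subgroup $T>P$ and apply the inductive hypothesis to that extension.

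Two subcases arise. If $P$ is not $\Ff$-centric, then since $P$ is fully normalized (hence fully centralized by~(I)), a size comparison of centralizers across the $\Ff$-conjugacy class yields $C_S(P)\not\le P$, so $T:=P\cdot C_S(P)>P$; centralizer elements act trivially on $P$ and therefore lie in $N_\psi$ for every $\psi\in\Aut_\Ff(P)$, so axiom~(II) extends $\psi$ to $T$. If instead $P$ is $\Ff$-centric but not $\Ff$-radical, there is a nontrivial normal $p$-subgroup $\overline N\unlhd\Out_\Ff(P)$ whose preimage $N$ in $\Aut_\Ff(P)$ is a normal $p$-subgroup containing $\Aut_P(P)$; since normal $p$-subgroups lie inside every Sylow $p$-subgroup and $\Aut_S(P)\in\Syl_p(\Aut_\Ff(P))$ by~(I), we may write $N=\Aut_T(P)$ for some $P<T\le N_S(P)$, and normality of $N$ gives $T\le N_\psi$ for every $\psi\in\Aut_\Ff(P)$, so again~(II) extends $\psi$ to $T$.

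The main obstacle I foresee is precisely this non-radical subcase: translating the abstract normal $p$-subgroup $\overline N$ of $\Out_\Ff(P)$ into a concrete overgroup $T$ of $P$ inside $N_S(P)$ is what makes the Sylow condition~(I) essential, and without it the inductive machinery has nothing to grip. Once the extension $\tilde\psi\colon T\to T'$ is in hand, the inductive hypothesis supplies a decomposition via subgroups $Q_i$ together with maps $\psi_i\in\Aut_\Ff(Q_i)$, and setting $P_i:=\psi_i\circ\cdots\circ\psi_1(P)$ inside the corresponding $T_i$ delivers the required chain for $\psi$ itself.
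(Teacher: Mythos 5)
The paper does not actually prove this statement: it is quoted as Alperin's fusion theorem from \cite[Theorem A.10]{BLO2}, so there is no internal proof to compare against. Your proposal reconstructs the standard Broto--Levi--Oliver/Puig argument, and the two subgroup-enlargement subcases are handled correctly: if $P$ is not $\Ff$-centric then full centralization forces $C_S(P)\not\leq P$ and $PC_S(P)\leq N_\psi$, while if $P$ is $\Ff$-centric but not $\Ff$-radical the preimage $N$ of $O_p(\Out_\Ff(P))$ is a normal $p$-subgroup of $\Aut_\Ff(P)$, lands inside $\Aut_S(P)\in\Syl_p(\Aut_\Ff(P))$ by axiom (I), and equals $\Aut_T(P)$ for some $T$ with $P\lneq T\leq N_\psi$; in both cases axiom (II) applies and the downward induction closes.

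The one genuine gap is in the reduction to a fully normalized representative $P^*$. Axiom (II) only extends $\varphi\colon P\to P^*$ to $N_\varphi$, and nothing guarantees $N_\varphi\gneq P$ for an arbitrary choice of $\varphi$; if $N_\varphi=P$ the ``extension'' is $\varphi$ itself and the induction has nothing to grip --- the very obstacle you identify in the non-radical case reappears here. The standard remedy again uses axiom (I) at $P^*$: since $\varphi\Aut_S(P)\varphi^{-1}$ is a $p$-subgroup of $\Aut_\Ff(P^*)$ and $\Aut_S(P^*)\in\Syl_p(\Aut_\Ff(P^*))$, one may replace $\varphi$ by $\chi\circ\varphi$ for a suitable $\chi\in\Aut_\Ff(P^*)$ so that $\varphi\Aut_S(P)\varphi^{-1}\leq\Aut_S(P^*)$, whence $N_\varphi=N_S(P)\gneq P$ whenever $P\lneq S$. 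Relatedly, this reduction cannot sit outside the induction as your plan suggests: the morphisms $P\to P^*$ and $P^*\to P'$ so obtained are restrictions of morphisms defined on $N_S(P)$ and $N_S(P')$, and decomposing \emph{those} already requires the inductive hypothesis for subgroups of order greater than $|P|$. The reduction must therefore be performed inside the inductive step; with these two adjustments your argument is complete.
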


Here we will recall some definitions and results concerning groups \cite[Chapter 2]{gorenstein-68}
and  group automorphisms \cite[Chapter 10]{gorenstein-68}:

A finite group $G$ is \textit{nilpotent} if the lower central series of $G$, defined as
$$
\gamma_1(G)=G, \, \gamma_i(G)=[\gamma_{i-1}(G),G] \mbox{ for $i\geq2$}
$$
satisfies that there exists $m$ such that $\gamma_m=\{1\}$.

As examples we have that every finite $p$-group is nilpotent \cite[Theorem 3.3(iii)]{gorenstein-68},
and more generally:
\begin{theorem}
A finite group $G$ is nilpotent if and only if it is the direct product of its Sylow $p$-subgroups.
\end{theorem}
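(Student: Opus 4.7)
The plan is to prove the two implications separately, with the reverse direction being essentially immediate from what is already cited. If $G = P_1 \times \cdots \times P_k$ is an internal direct product of its Sylow subgroups, then each $P_i$ is a finite $p_i$-group, hence nilpotent by the preceding result. Since the lower central series of a direct product is termwise the product of the lower central series of the factors, its nilpotency class is the maximum of the classes of the factors, and $G$ is nilpotent.

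For the forward direction, the plan is to show that every Sylow subgroup of a nilpotent finite group $G$ is normal in $G$, and then assemble them into a direct product. The key technical tool is the \emph{normalizer condition}: in a nilpotent finite group, every proper subgroup $H$ is strictly contained in $N_G(H)$. I would prove this by induction on the nilpotency class using $Z(G)$: if $Z(G) \not\leq H$ then $H \subsetneq H \cdot Z(G) \leq N_G(H)$ and we are done; otherwise the image $H/Z(G)$ in $G/Z(G)$ has strictly larger normalizer there by the inductive hypothesis, which pulls back to $H \subsetneq N_G(H)$. Combined with the standard Frattini-argument fact that $N_G(N_G(P)) = N_G(P)$ for any Sylow $p$-subgroup $P$, the normalizer condition forces $N_G(P) = G$, so $P \trianglelefteq G$.

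Once every Sylow $p_i$-subgroup $P_i$ is normal in $G$, for distinct primes $p_i \neq p_j$ the intersection $P_i \cap P_j$ is trivial by Lagrange, and the commutator $[P_i, P_j]$ lies in $P_i \cap P_j = 1$ by normality of both, so elements of distinct Sylow subgroups commute. Therefore $P_1 P_2 \cdots P_k$ is a subgroup whose order equals $\prod_i |P_i| = |G|$, giving equality, and the commuting plus trivial intersection conditions upgrade this product to an internal direct product.

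The main obstacle I anticipate is establishing the normalizer condition cleanly; this is the only step where nilpotency is used in an essential way, and it requires a careful induction on the nilpotency class together with the nontriviality of $Z(G)$. The remaining arguments reduce to elementary order counting and the observation that normal subgroups of coprime orders centralize each other.
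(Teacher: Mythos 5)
Your proof is correct. The paper states this theorem as a classical background fact (from Gorenstein's book) and gives no proof of its own, so there is nothing to compare against; your argument --- normalizer condition by induction on the class via $Z(G)$, the identity $N_G(N_G(P))=N_G(P)$ for Sylow subgroups, and the coprime-order commuting argument to assemble the internal direct product --- is the standard textbook proof and all steps are sound.
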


So the fusion on $S$, a Sylow $p$-subgroup of a nilpotent group $G$, satisfies $\Ff_S(G)=\Ff_S(S)$.

\begin{definition}
An automorphism $\varphi$ of a group $G$ is said to be \emph{fixed-point-free} if it leaves
only the identity element of $G$ fixed.
\end{definition}

The following result shows that fixed-point-free are compatible with the $p$-local structure \cite[Theorem 10.1.2 and Lemma 10.1.3]{gorenstein-68}:

\begin{theorem}\label{thm:gor_10.1.2_3}
Let $G$ be a finite group and $p$ a prime dividing the order of $G$. If
$\varphi\colon G \to G$ is a fixed-point-free morphism then:
\begin{enumerate}[\rm (a)]
 \item There exists a unique $\varphi$-invariant Sylow $p$-subgroup $S$ of $G$,
and it contains every $\varphi$-invariant $p$-subgroup of $G$.
 \item If $H$ is a $\varphi$-invariant normal subgroup of $G$, then $\varphi$ induces
a fixed-point-free automorphism of $G/H$.
\end{enumerate}
\end{theorem}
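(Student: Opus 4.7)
The plan rests on one elementary but crucial lemma: if $\varphi$ is a fixed-point-free automorphism of a finite group $H$, then the map $\psi_H\colon H\to H$ given by $\psi_H(h)=h^{-1}\varphi(h)$ is a bijection. Injectivity is immediate, since $\psi_H(h_1)=\psi_H(h_2)$ rearranges to $h_2h_1^{-1}=\varphi(h_2h_1^{-1})$, forcing $h_2=h_1$; surjectivity then follows by finiteness. The key observation is that the same lemma applies inside every $\varphi$-invariant subgroup of $G$, since the restriction of a fixed-point-free automorphism remains fixed-point-free.

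Part (b) is then a direct application: if $\varphi(gH)=gH$ then $g^{-1}\varphi(g)\in H$, so $\psi_H$ produces $h\in H$ with $h^{-1}\varphi(h)=g^{-1}\varphi(g)$, which forces $hg^{-1}$ to be $\varphi$-fixed in $G$; hence $g=h\in H$ and the induced map on $G/H$ has only the identity coset fixed.

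For part (a) I would argue in three steps. \emph{Existence:} given any Sylow $P$, write $\varphi(P)=x^{-1}Px$, factor $x=h^{-1}\varphi(h)$ via $\psi_G$, and verify by direct computation that $hPh^{-1}$ is $\varphi$-invariant. \emph{Uniqueness:} if $S_1,S_2$ are $\varphi$-invariant Sylows with $S_2=g^{-1}S_1g$, applying $\varphi$ to both sides yields $g\varphi(g)^{-1}=\psi_G(g^{-1})\in N_G(S_1)$; since $N_G(S_1)$ is $\varphi$-invariant, the bijection $\psi_{N_G(S_1)}$ gives $m\in N_G(S_1)$ with $\psi_G(m)=\psi_G(g^{-1})$, and injectivity of $\psi_G$ on $G$ forces $g^{-1}=m\in N_G(S_1)$, so $S_2=S_1$. \emph{Containment:} for a $\varphi$-invariant $p$-subgroup $Q$, the normalizer $N_G(Q)$ is $\varphi$-invariant and every Sylow of $N_G(Q)$ contains $Q$; existence inside $N_G(Q)$ produces a $\varphi$-invariant $p$-subgroup $T\geq Q$ that is Sylow in $N_G(Q)$. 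Choosing $Q$ maximal among $\varphi$-invariant $p$-subgroups and invoking the standard fact that a $p$-subgroup which is Sylow in its own normalizer is Sylow in $G$, one arrives at a $\varphi$-invariant Sylow of $G$, which by uniqueness coincides with the $S$ produced above.

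The crux, and the step I expect to require the most care, is uniqueness: one must recognize that the ``cocycle obstruction'' $g\varphi(g)^{-1}$ lands in the $\varphi$-invariant subgroup $N_G(S_1)$ and can be trivialized by replaying the bijection lemma inside that smaller group. The very same descent principle drives the containment step, so once the bijection lemma is in hand the whole theorem is essentially one idea applied three times.
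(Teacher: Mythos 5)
Your proposal is correct: the bijectivity of $h\mapsto h^{-1}\varphi(h)$, the cocycle computation for existence, the descent into the $\varphi$-invariant normalizer for uniqueness, and the ``Sylow in its own normalizer'' argument for containment are exactly the classical proofs of \cite[Theorem 10.1.2 and Lemma 10.1.3]{gorenstein-68}, which the paper cites without reproving. The only cosmetic adjustment needed is in the containment step, where $Q$ should be taken maximal among $\varphi$-invariant $p$-subgroups \emph{containing} the given one.
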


Let $\Aut(\Ff)$ denote the group of automorphisms of $\Ff$:
\begin{multline*}
\Aut(\Ff)=\{\varphi\in\Aut(S)\mid\text{ if }\alpha \in  \Hom_\Ff(P,Q)\\ \text{ then }\varphi|_Q \circ \alpha \circ (\varphi|_P)^{-1}\in\Hom_\Ff(\varphi(P),\varphi(Q))\}
\end{multline*}
We are now ready to give a definition of what a fixed-point-free
automorphism of a saturated fusion system is.

\begin{definition}
\label{def:fixpointfreefusion}
Let $(S,\Ff)$ be a saturated fusion system. Then $\varphi \in \Aut(\Ff)$ is
a \emph{fixed-point-free} automorphism if the following hold
\begin{itemize}
 \item $\varphi\colon S \to S$ is fixed-point-free and
 \item $\varphi_\sharp\colon \Aut_\Ff(P) \to \Aut_\Ff(P)$, defined as
$\varphi_\sharp(\alpha)=\varphi \circ \alpha \circ (\varphi|_P)^{-1}$,
is fixed-point-free
for all $\varphi$-invariant subgroup $P \leq S$.
\end{itemize}
\end{definition}

The next result shows that this definition generalizes the concept of
fixed-point-free automorphism of a finite group:

\begin{proposition}\label{prop:fixedpointfree}
Let $G$ be a finite group and $p$ be a prime dividing the order of $G$.
If $\varphi\colon G \to G$ is a fixed-point-free automorphism then
$\varphi$ induces a fixed-point-free automorphism of $\Ff_G(S)$, where
$S$ is the only $\varphi$-invariant Sylow $p$-subgroup of $G$.
\end{proposition}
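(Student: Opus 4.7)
The plan is to verify the two conditions of Definition \ref{def:fixpointfreefusion} for the automorphism of $\Ff_G(S)$ induced by $\varphi$. First I would invoke Theorem \ref{thm:gor_10.1.2_3}(a) to pick the unique $\varphi$-invariant Sylow $p$-subgroup $S$, and observe that $\varphi|_S$ is fixed-point-free: any fixed point of $\varphi|_S$ is in particular a fixed point of $\varphi$ on $G$, hence trivial. Next I would check that $\varphi$ really does induce an automorphism of $\Ff_G(S)$. Morphisms in $\Ff_G(S)$ are restrictions of conjugations $c_g$ with $g\in G$, and the identity $\varphi\circ c_g\circ\varphi^{-1}=c_{\varphi(g)}$ shows that $\varphi|_Q\circ c_g|_P\circ(\varphi|_P)^{-1}$ sends $\varphi(P)$ into $\varphi(Q)$, so $\varphi\in\Aut(\Ff_G(S))$.

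For the second condition, fix a $\varphi$-invariant subgroup $P\leq S$. Both $N_G(P)$ and $C_G(P)$ are $\varphi$-invariant, with $C_G(P)\unlhd N_G(P)$. Under the canonical identification $\Aut_{\Ff_G(S)}(P)\cong N_G(P)/C_G(P)$, the map $\varphi_\sharp$ corresponds to the quotient automorphism induced by $\varphi|_{N_G(P)}$: for $g\in N_G(P)$, the computation $\varphi\circ c_g\circ (\varphi|_P)^{-1}=c_{\varphi(g)}$ in $\Aut(P)$ (using that $\varphi(P)=P$) translates $\varphi_\sharp(gC_G(P))$ into $\varphi(g)C_G(P)$. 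Since $\varphi|_{N_G(P)}$ is the restriction of a fixed-point-free automorphism to an invariant subgroup, it is itself fixed-point-free; Theorem \ref{thm:gor_10.1.2_3}(b) applied to $N_G(P)$ with normal subgroup $C_G(P)$ then yields that the induced automorphism on $N_G(P)/C_G(P)$ is fixed-point-free, which is exactly what we need.

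I do not expect any serious obstacle: the proposition essentially packages Theorem \ref{thm:gor_10.1.2_3} at the level of each $N_G(P)/C_G(P)$. The only care needed is the identification of $\varphi_\sharp$ with the quotient of $\varphi|_{N_G(P)}$ modulo $C_G(P)$, together with the elementary observation that restrictions of fixed-point-free automorphisms to invariant subgroups remain fixed-point-free; both are immediate.
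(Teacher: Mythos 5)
Your proposal is correct and follows essentially the same route as the paper: restrict $\varphi$ to the unique invariant Sylow from Theorem \ref{thm:gor_10.1.2_3}(a), and apply Theorem \ref{thm:gor_10.1.2_3}(b) to $N_G(P)/C_G(P)\cong\Aut_{\Ff_G(S)}(P)$ for each $\varphi$-invariant $P\leq S$. The only difference is that you spell out the verification that $\varphi\in\Aut(\Ff_G(S))$ and the identification of $\varphi_\sharp$ with the quotient automorphism, details the paper leaves implicit.
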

\begin{proof}
According to Theorem \ref{thm:gor_10.1.2_3}(a) there exists $S$ a unique $\varphi$-invariant Sylow $p$-subgroup of $G$.
As $\varphi|_S$ is the restriction of a fixed-point-free group morphism, it is
also fixed-point-free.

Consider now $P\leq S$ such that $\varphi(P)=P$. Then,
$C_G(P)$ and $N_G(P)$ are $\varphi$-invariant and $\varphi|_{C_G(P)}$
and $\varphi|_{N_G(P)}$ are fixed-point-free. Using
Theorem \ref{thm:gor_10.1.2_3}(b), $\varphi$ induces a fixed-point-free
group morphism $$\varphi_\sharp\colon \Aut_{\Ff_G(S)}(P)\cong N_G(P)/C_G(P) \to N_G(P)/C_G(P)\cong\Aut_{\Ff_G(S)}(P).$$
\end{proof}


\section{Thompson Theorems for saturated fusion systems}\label{Thompson}

In this section we give a unified proof of Theorems \ref{thm:A} and \ref{thm:B}. In order to it so, we introduce the concept of $\T$-automorphism:

\begin{definition}
Let $(S,\Ff)$ be a saturated fusion system. We say that \emph{$\Ff$ admits $\T$-automorphisms} if there exists $\U\leq\Aut(\Ff)$ such that one of the following holds:
\begin{itemize}
\item $\U=\langle\varphi\rangle$ where $\varphi$ is a fixed-point-free automorphism of prime order.
\item For every $\U$-invariant normal subgroup $Q\unlhd S$, $\Aut_\Ff(Q)$ is a $p$-group.
\end{itemize}
\end{definition}

The following lemma is a particular case for finite groups which can be proved directly.

\begin{lemma}\label{lemma:semidirect}
Let $G$ be the semidirect product $V\rtimes H$ where $V$ is an elementary abelian $p$-group $V$ and $H$ is
a group with an element of order prime to $p$ which does not centralize $V$. Then, given any $p$-sylow subgroup $S$ of $G$,
$\Ff_S(G)$ does not admit $\T$-automorphisms that leave $V$ invariant.
\end{lemma}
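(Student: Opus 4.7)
The plan is to exploit the subgroup $V$: since $V \unlhd G$, we have $V \leq S$ and $V \unlhd S$, and the relevant automorphism group is straightforward to compute,
\[
\Aut_{\Ff_S(G)}(V) \;=\; N_G(V)/C_G(V) \;=\; G/C_G(V) \;\cong\; H/C_H(V),
\]
which is \emph{not} a $p$-group: the image of the given element $h \in H$ is a nontrivial element of order prime to $p$. This immediately dispatches the second alternative in the definition of $\T$-automorphism; if such a $\U$ left $V$ invariant, then $V$ would be a $\U$-invariant normal subgroup of $S$ and hence $\Aut_\Ff(V)$ would have to be a $p$-group, contradicting the display above.

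For the first alternative, say $\U = \langle \varphi \rangle$ with $\varphi$ fixed-point-free of prime order $r$, I would first note that $r \neq p$: otherwise $\varphi|_V$ would be a fixed-point-free automorphism of order $p$ on the nontrivial $p$-group $V$, which is impossible by the standard center-of-$p$-group argument applied to $V \rtimes \langle \varphi|_V \rangle$. Next, I would form the semidirect product
\[
L \;=\; V \rtimes \Aut_\Ff(V)
\]
via the faithful inclusion $\Aut_\Ff(V) \hookrightarrow \Aut(V)$. The identity $\varphi_\sharp(\alpha) = \varphi|_V \circ \alpha \circ \varphi|_V^{-1}$ ensures that $(\varphi|_V, \varphi_\sharp)$ assembles into a group automorphism $\hat\varphi$ of $L$, and $\hat\varphi$ is fixed-point-free of prime order $r$: a fixed point $(v,\alpha)$ would have both coordinates separately fixed by $\varphi$, hence equal $(1,1)$.

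Applying the classical Thompson nilpotency theorem \cite{Thompson1} to $L$ gives that $L$ is nilpotent, so it is the direct product of its Sylow subgroups and every $p'$-element of $L$ centralizes the (unique) Sylow $p$-subgroup, which contains $V$. But the image $\bar h \in \Aut_\Ff(V) \leq L$ is a nontrivial $p'$-element that by hypothesis acts nontrivially on $V$, contradicting the faithfulness of $\Aut_\Ff(V) \hookrightarrow \Aut(V)$. The main conceptual step is spotting the auxiliary group $L$; once it is assembled, the fusion-theoretic data translate directly into a group-theoretic fixed-point-free action, and the classical nilpotency theorem does the rest.
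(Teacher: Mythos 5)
Your proof is correct, but it takes a genuinely different route from the paper's. After the common first move (computing $\Aut_{\Ff_S(G)}(V)=G/C_G(V)\cong H/C_H(V)$ to kill the second alternative and reduce to $\U=\langle\varphi\rangle$ with $\varphi$ fixed-point-free of prime order $r\ne p$), the paper runs a minimal-counterexample argument: it declares ``without loss of generality'' that $\varphi$ is an honest automorphism of $G$, then uses minimality to shrink $H$ to an elementary abelian $q$-group acting faithfully and without nonzero fixed vectors on $V$, and finally invokes a coprime-action result (\cite[Theorem 3.4.4]{gorenstein-68} applied to $H\rtimes\Z/r$ acting on $V$) to contradict fixed-point-freeness of $\varphi|_V$. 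You instead assemble the auxiliary group $L=V\rtimes\Aut_\Ff(V)$ directly, check that $(\varphi|_V,\varphi_\sharp)$ defines an automorphism $\hat\varphi$ of $L$ (the compatibility $\varphi_\sharp(\alpha)(\varphi(w))=\varphi(\alpha(w))$ is exactly what is needed, and fixed-point-freeness of $\hat\varphi$ follows coordinatewise from Definition \ref{def:fixpointfreefusion} applied to the $\varphi$-invariant subgroup $V$), and then apply Thompson's nilpotency theorem \cite[Theorem 1]{Thompson1} to conclude that the nontrivial $p'$-element $\bar h$ must centralize $V$, a contradiction. What your version buys: it is shorter, it avoids the whole reduction of $H$, and it makes rigorous the paper's rather breezy ``we may assume $\varphi$ is an honest fixed-point-free automorphism of $G$'' by explicitly building the group on which the honest automorphism lives. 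What it costs: you invoke Thompson's full nilpotency theorem, which is a much heavier classical input than the elementary coprime-action lemma the paper uses, and it is precisely the group-theoretic ancestor of Theorem \ref{thm:B}; since the authors advertise their argument as avoiding a reduction to the group case, your proof, while logically sound (no circularity, as Thompson's theorem is established independently), works against that self-contained spirit. If you wanted to stay closer to the paper's economy, you could keep your construction of $L$ and $\hat\varphi$ and then feed $L$ into the paper's elementary endgame instead of into Thompson's theorem.
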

\begin{proof}
Assume $G=V\rtimes H$ is a minimal counterexample to the statement, $S$ be a $p$-sylow subgroup of $G$, and $\U$ be a group of $\T$-automorphisms of $\Ff_S(G)$ that leaves $V$ invariant. Since $V$ is $\U$-invariant normal in $G$, it is so in $S$: According to the hypothesis, $\Aut_\Ff(V)=N_G(V)/C_G(V)$ contains an element of order prime to $p$ (coming from $H$), hence we may assume that $\U=\langle\varphi\rangle$ where $\varphi$ is a fixed-point-free automorphism of prime order $r$. Without loss of generality, we may assume that $\varphi$ is an honest fixed-point-free automorphism of $G$.

The automorphism $\varphi$ restricts to a fixed-point-free automorphism of $V$, and according to
Theorem \ref{thm:gor_10.1.2_3}(b), $\varphi$ induces a fixed-point-free automorphism on $G/V\cong H$, namely $\widetilde{\varphi}$.

Let $q$ be any prime dividing the order of $H$. By Theorem \ref{thm:gor_10.1.2_3}(a), there exists $Q$, a $\widetilde{\varphi}$-invariant
Sylow $q$-subgroup in $H$. Now, if $Q\lneq H$,  $\varphi$ induces a fixed-point-free automorphism on $V\rtimes Q\lneq G$ in contradiction to the minimality of $G$. Therefore $H$ must be a $q$-group. Moreover, if $H$ is not abelian then the center $Z(Q)\lneq H$ is $\widetilde{\varphi}$-invariant and $\varphi$ induces a fixed-point-free automorphism on $V\rtimes Z(H)\lneq G$ in contradiction to the minimality of $G$. Hence $H$ is abelian.
Finally if $H$ is not elementary abelian, then characteristic subgroup $\Omega(H)\lneq H$ of elements of
order $q$ is $\widetilde{\varphi}$-invariant and $\varphi$ induces a fixed-point-free automorphism on $V\rtimes \Omega(H)\lneq G$ in contradiction to the minimality of $G$.

Now $r$, the order of $\varphi$, is different to $p$:
the restriction of $\varphi$ to $V$ gives a fixed-point-free action over $V$, a $p$-group,
so $r\neq p$.

We can assume that $H$ acts over $V$ without fixed points: consider
a set $A=\{ h_1, \dots , h_n\}$ of generators of $H$. Also $\varphi(A)$ generates $H$.
If $x \in V$ and $h\in H$, then we have
the identity $\varphi(h(x))=\varphi(h)(\varphi(x))$, so if $h(x)=x$ for all $h\in H$,
$\varphi(x)=\varphi(h)(\varphi(x))$. But $H=\{\varphi(h)\}_{h\in H}$ and the fixed points
by $H$ is a $\varphi$ invariant subgroup.
Let $N$ be the subgroup of $V$ of fixed points by $H$ and assume $N$ is not trivial.
Then we can construct the group $(V/N)\rtimes H$ and $\varphi$ induces a fixed-point-free
action by Theorem \ref{thm:gor_10.1.2_3}(b), and we get a contradiction with the minimality of $G$.

Consider now $L$ the semidirect product $H \rtimes \Z/r$.
The centralizer of $H$ in $L$ is itself, and we are
assuming that $V$ is a faithful $H$-module, and $p\neq r$. Then we can apply
\cite[Theorem 3.4.4]{gorenstein-68} to deduce that $\varphi$ cannot restrict to a fixed-point-free
automorphism of $V$, getting a contradiction.
\end{proof}

We now recall the definition of an $H$-free fusion system, for a finite group $H$ \cite[Section 6]{KL}: If $\Ff$ is a saturated fusion system over a $p$-group $S$ and $P$ is
an $\Ff$-centric fully normalized in $\Ff$ subgroup of $S$, then $N_\Ff(P)$ is a constrained fusion system \cite[Definition 4.1]{BCGLO} and
there is, up to isomorphism, a unique
finite group $L=L_P^{\Ff}$ having $N_S(P)$ as a Sylow $p$-subgroup such that
$C_L(P)=Z(P)$ and $N_\Ff(P)=\Ff_{N_{S}(P)}(L)$ \cite[Theorem 4.3]{BCGLO}. Then,
 
\begin{definition}\label{def:h-free}
Let $H$ is a finite group, and $\Ff$ be a saturated fusion system over a $p$-group $S$. We say that \emph{$\Ff$ is $H$-free}, if $H$ is not involved in any of the groups $L_P^{\Ff}$, with $Q$ running over the set of $\Ff$-centric fully normalized in $\Ff$ subgroups of $S$.
\end{definition}

\begin{proof}[Proof of Theorems \ref{thm:A} and \ref{thm:B}]
We will proceed considering a minimal counterexample and getting a contradiction.
So, let $S$ be the smallest $p$-group and $\Ff$ the saturated fusion system
with a minimal number of morphisms such that $(S,\Ff)$ admits $\U$ a group of $\T$-automorphisms and $\Ff_S(S)\lneq\Ff$.

\noindent\textbf{Step 1:} There exists a non-trivial elementary abelian proper
subgroup $W(S)\lneq S$ such that $(S,\Ff)=(S,N_\Ff(W(S)))$:

Given a group $G$, let $Z(G)$,
$J(G)$, and $\Omega(G)$ denote the center, the Thompson subgroup, and the group generated by the elements of order $p$ of $G$ respectively. Let $W(S)$ be the characteristic subgroup of $S$ defined in \cite[Section 4]{Onofrei-Stancu}. Then
$\Omega(Z(S))\leq W(S) \leq \Omega(Z(J(S)))$, and $W(S)$ is non-trivial and elementary abelian. Moreover, as $W(S)$ is characteristic,
it is $\U$-invariant and normal in $S$. This implies that $W(S)$ is fully normalized in
$\Ff$ so, by Proposition \ref{prop:fullynormalized}, $N_\Ff(W(S))$ is a saturated
fusion system over $S$.

Let us see now that $(S,\Ff)=(S,N_\Ff(W(S)))$: assume $N_\Ff(W(S))\lneq\Ff$.
Then $\U$ induces a group of $\T$-automorphisms in $(S,N_\Ff(W(S)))$, so if there are morphisms in $\Ff$ which are
not in $N_\Ff(W(S))$, by minimality of $\Ff$, $N_\Ff(W(S))=\Ff_S(S)$. But,
as $p$ is an odd prime (respectively $p=2$ and $\Ff$ is $\Sigma_4$-free),
by \cite[Theorem 1.3]{Onofrei-Stancu} (respectively \cite[Theorem 1.1]{Onofrei-Stancu})
this implies $\Ff=\Ff_S(S)$, so
$N_\Ff(W(S))=\Ff$, getting a contradiction.

If $W(S)=S$ then, applying Lemma \ref{lemma:semidirect}, $(S,\Ff)=(S,\Ff_S(S))$, so it is
not a counterexample.

\noindent\textbf{Step 2:} According to Lemma \ref{lemma:oliver-quotients}, $(S,\Ff)$ projects onto a saturated fusion system
$(S/W(S),\Ff/W(S))$ and $\U$ induces a group of $\T$-automorphism of $\Ff/W(S)$. So, by the minimality hypothesis, $\Ff/W(S)=\Ff_{S/W(S)}(S/W(S))$.

\noindent\textbf{Step 3:} There is an element $\alpha$ of prime order $q$ in $\Aut_\Ff(W(S))$,
with $q\neq p$:

As we are assuming that $\Ff_S(S)\lneq \Ff$, using the Alperin's Theorem
for saturated fusion systems (Theorem \ref{teo:alperin}), there exists $P$,
a fully normalized in $\Ff$,
$\Ff$-centric, $\Ff$-radical subgroup of $S$
with an $\Ff$-automorphism $\tilde \alpha$ of prime order $q$, $q\neq p$.
By \cite[Proposition 1.6]{BCGLO},
as $W(S)$ is normal in $\Ff$ and $P$ is $\Ff$-radical, $W(S)$ is contained in $P$
and we have induced maps
in the normal series $1 \trianglelefteq W(S) \trianglelefteq P$. By Step 2,
$\tilde \alpha$ projects to the identity in $P/W(S)$. If the restriction of
$\tilde \alpha$
to $W(S)$ is also the identity, by \cite[Theorem 5.3.2]{gorenstein-68}
the order of $\tilde \alpha$ is a power of $p$, getting a contradiction. So
$\tilde \alpha$ restricts to an automorphism $\alpha$ on $W(S)$ of order $q$.

\noindent\textbf{Last step:}
We finish the proof by considering $G=W(S) \rtimes H$ where $H=\Aut_\Ff(W(S))$, then the automorphism group induced
by $\U$ on $G$ is a group of $\T$-automorphism that leaves $W(S)$ invariant, what contradicts Lemma \ref{lemma:semidirect}. Therefore there is no minimal counterexample $(S,\Ff)$.
\end{proof}

We finish this section with a couple of examples that ilustrate the scope of these results. The first example, what seems to be well known for the experts (see comments below \cite[Theorem A]{Thompson1}), shows that the $\Sigma_4$-free hypothesis in the $p=2$ case is a necessary one.

\begin{example}\label{Ex:s4-free}
Let $G$ be the simple group $L(2,17)$, and let $S\leq G$ be a $2$-sylow subgroup. According to \cite[p.\ 9]{atlas}, the $2$-fusion is completely determined by its self normalizing sylow $S\cong D_{16}$ and two conjugacy classes of rank two elementary abelian subgroups of type $2A^2$ whose normalizer is isomorphic to $\Sigma_4$. So $\Ff=\Ff_S(G)$ is not $\Sigma_4$-free, $\Ff\ne\Ff_S(S)$, and for every normal $Q\unlhd S$, $\Aut_\Ff(Q)$ is a $2$-group (since the only maximal subgroup of $G$ of index coprime to $2$ is $S$). Therefore, for every $\U\leq\Aut(\Ff)$, and $\U$-invariant normal subgroup $Q\unlhd S$, $\Aut_\Ff(Q)$ is a $2$-group, but $\Ff\ne\Ff_S(S)$.
\end{example}

The last example shows that group automorphisms with fixed points can induce a fixed-point-free automorphisms of saturated fusion system.

\begin{example}\label{Ex:gp-vs-fusion}
Let $S$ be a finite $p$-group such that there exist a fixed-point-free automorphism $\phi$ of $S$. Consider $H=S\rtimes\langle\phi\rangle$ and let $N$ be a group whose order is coprime to $p$ and a homomorphism $f\colon H\to \text{Aut} (N)$. The homomorphism $f$ defines a semidirect product $G=N\rtimes H$, where the subgroup $K=SN$ is a normal subgroup of $G$ and $\phi$ acts on $K$ not necessary without fix points, but it acts on the fusion category $\Ff_S(G)=\Ff_S(S)$ without fix points.

An example of this situation is the group $A_4$ and $\phi\in\aut(A_4)$ given by conjugation in $S_4$ by the transposition $(1,2)$. Then $\phi$ fixes the Sylow $3$-subgroup $S=\langle (1,2,3)\rangle$, and furthermore it acts without fix points on the fusion category $\Ff_S(A_4)$.
\end{example}

\bibliographystyle{plain}
\bibliography{biblio}

\end{document}